\definecolor{verylight}{gray}{0.97}
\definecolor{light}{gray}{0.9}
\definecolor{medium}{gray}{0.85}
\definecolor{dark}{gray}{0.6}
\def\frk{\frak}               
\def\Phi{{\frk n}}
\def\Phi{{\frk N}}
\def\opn#1#2{\def#1{\operatorname{#2}}} 
\opn\chara{char} \opn\length{\ell} \opn\pd{pd} \opn\rk{rk}
\opn\projdim{proj\,dim} \opn\injdim{inj\,dim} \opn\rank{rank}
\opn\depth{depth} \opn\grade{grade} \opn\height{height}
\opn\embdim{emb\,dim} \opn\codim{codim}
\opn\Tr{Tr} \opn\bigrank{big\,rank}
\opn\superheight{superheight}\opn\lcm{lcm}
\opn\trdeg{tr\,deg}
\opn\reg{reg} \opn\lreg{lreg} \opn\ini{in} \opn\lpd{lpd}
\opn\size{size}\opn\bigsize{bigsize}
\opn\cosize{cosize}\opn\bigcosize{bigcosize}
\opn\sdepth{sdepth}\opn\sreg{sreg}
\opn\link{link}\opn\fdepth{fdepth}
\opn\deg{deg}
\opn\max{max}
\opn\indeg{indeg}
\opn\min{min}
\opn\psln{psln}
\opn\div{div} \opn\Div{Div} \opn\cl{cl} \opn\Cl{Cl}
\let\epsilon\varepsilon
\let\phi=\varphi
\let\kappa=\varkappa
\opn\Spec{Spec} \opn\Supp{Supp} \opn\supp{supp} \opn\Sing{Sing}
\opn\Ass{Ass} \opn\Min{Min}\opn\Mon{Mon} \opn\dstab{dstab} \opn\astab{astab}
\opn\Syz{Syz}
\opn\Ann{Ann} \opn\Rad{Rad} \opn\Soc{Soc}
\opn\Im{Im} \opn\Ker{Ker} \opn\Coker{Coker} \opn\Am{Am}
\opn\Hom{Hom} \opn\Tor{Tor} \opn\Ext{Ext} \opn\End{End}
\opn\Aut{Aut} \opn\id{id}
\opn\nat{nat}
\opn\pff{pf}
\opn\Pf{Pf} \opn\GL{GL} \opn\SL{SL} \opn\mod{mod} \opn\ord{ord}
\opn\Gin{Gin} \opn\Hilb{Hilb}\opn\sort{sort}
\opn\initial{init}
\opn\ende{end}
\opn\height{height}
\opn\depth{depth}
\opn\type{type}
\opn\ldim{ldim}
\opn\lk{lk}
\opn\del{del}
\opn\aff{aff} \opn\con{conv} \opn\relint{relint} \opn\st{st}
\opn\lk{lk} \opn\cn{cn} \opn\core{core} \opn\vol{vol}
\opn\link{link} \opn\star{star}\opn\lex{lex}
\opn\gr{gr}
\def\pot#1#2{#1[\kern-0.28ex[#2]\kern-0.28ex]}
\opn\dirlim{\underrightarrow{\lim}}
\opn\inivlim{\underleftarrow{\lim}}
\def\Implies{\ifmmode\Longrightarrow \else
        \unskip${}\Longrightarrow{}$\ignorespaces\fi}
\def\implies{\ifmmode\Rightarrow \else
        \unskip${}\Rightarrow{}$\ignorespaces\fi}
\def\iff{\ifmmode\Longleftrightarrow \else
        \unskip${}\Longleftrightarrow{}$\ignorespaces\fi}
 \theoremstyle{plain}
\newtheorem{Theorem}{Theorem}[section]
 \newtheorem{Lemma}[Theorem]{Lemma}
 \newtheorem{Corollary}[Theorem]{Corollary}
 \newtheorem{Proposition}[Theorem]{Proposition}
 \newtheorem{Question}[Theorem]{Question}
 \theoremstyle{definition}
 \newtheorem{Definition}[Theorem]{Definition}
 \newtheorem{Example}[Theorem]{Example}
\let\epsilon\varepsilon
\let\kappa=\varkappa
\def\qed{\ifhmode\textqed\fi
      \ifmmode\ifinner\quad\qedsymbol\else\dispqed\fi\fi}
\def\textqed{\unskip\nobreak\penalty50
       \hskip2em\hbox{}\nobreak\hfil\qedsymbol
       \parfillskip=0pt \finalhyphendemerits=0}
\def\dispqed{\rlap{\qquad\qedsymbol}}
\opn\dis{dis}
\def\pnt{{\raise0.5mm\hbox{\large\bf.}}}
\opn\Lex{Lex}
\begin{document}

\author[Mafi and Rasul Qadir]{ Amir Mafi and Rando Rasul Qadir}
\title{Betti numbers and almost complete intersection monomial ideals}

\address{Amir Mafi, Department of Mathematics, University Of Kurdistan, P.O. Box: 416, Sanandaj, Iran.}
\email{a\_mafi@ipm.ir}
\address{Rando Rasul Qadir, Department of Mathematics, University of Kurdistan, P.O. Box: 416, Sanandaj,
Iran.}
\email{rando.qadir@univsul.edu.iq.}

\begin{abstract}
Let $R=K[x_1,\ldots, x_n]$ be the polynomial ring in $n$ variables over a field $K$ and let $I$ be a monomial ideal of $R$. 
In this paper, we present an explicit formula for the Betti numbers of almost complete intersection monomial ideals,
which enables a rapid construction of their minimal free resolutions. In addition, we characterize the Cohen-Macaulayness of these ideals and also
we show the same result for dominant monomial ideals.
\end{abstract}
\subjclass[2020]{13C14, 13H10, 13D02, 13F55}
\keywords{Cohen-Macaulay, almost complete intersection, monomial ideal, Betti numbers.}

\maketitle
\section*{Introduction}
Throughout this paper, let $R=K[x_1,\ldots,x_n]$ denote a polynomial ring in $n$ variables over a field $K$, and let $I$ be a monomial ideal of $R$. We denote by $G(I)=\{u_1,\ldots,u_q\}$ the unique minimal set of monomial generators of $I$. For a monomial $u=x_1^{a_1} \ldots x_n^{a_n}$, we define its support as $\supp(u) = \{x_i | a_i > 0 \}$.

Suppose $\Delta$ is a simplicial complex, and let $I=I_{\Delta}$ be its Stanley-Reisner ideal generated by all squarefree monomials $x_{i_1}x_{i_2}\ldots x_{i_r}$ such that $\{i_1,\ldots,i_r\}$ is not a face of $\Delta$. The Stanley-Reisner ring is then given by $K[\Delta]=R/I$. The squarefree monomial ideal $I$ with a minimal prime decomposition $I = \cap_{i=1}^{t} \frak{p_{i}}$, where each  $\frak{p_{i}}$ is a monomial prime ideal. Then the {\it Alexander dual} $I^{\vee}$ of $I$ is generated by monomials $u_i = \prod_{x_{j} \in G(\frak{p_{i}})} x_{j}$, where $G(\frak{p_{i}})$ is the minimal set of variables generating $\frak{p_{i}}$.

In the context of combinatorial topology, Dress \cite{D} established that a simplicial complex  $K[\Delta]$ is shellable in the non-pure case if and only if $K[\Delta]$
 is clean. Also, Herzog, Hibi, and Zheng \cite{HHZ1} proved that  $K[\Delta]$ is shellable in the non-pure case if and only if the Alexander dual ideal $I^{\vee}$ has linear quotients.

The minimal free resolution of the cyclic module $R/I$ is considered as:

$$0\longrightarrow R^{\beta_{n}}\longrightarrow\ldots\longrightarrow R^{\beta_{1}}\longrightarrow R^{\beta_{0}} \longrightarrow R/I\longrightarrow 0,$$

where the integers $\beta_{i}$ are the {\it Betti numbers} of $R/I$. The Taylor resolution \cite[p. 439]{E} provides a construction for a free resolution of $R/I$. Alesandroni \cite[Theorem 4.4]{A} characterized the monomial ideals for which this resolution is minimal. Generally, the Taylor resolution is far from minimal, but it provides an upper bound for the Betti numbers of 
$R/I$ as follows: $\beta_{i}(R/I)\leq{q\choose i}$, for $0\leq i\leq q$. In \cite[Remark 1.6]{HHMY}, it is shown that if $\beta_{i}(R/I)={q\choose i}$ for some $0\leq i\leq q$, then $\beta_{j}(R/I)={q\choose j}$ for all $j\leq i$. Furthermore, Brun and R\"omer \cite[Corollary 4.1]{BR} established a lower bound for the Betti numbers as follows:  $\beta_{i}(R/I)\geq{p\choose i}$ for $0\leq i\leq p$, where $p$ denotes the projective dimension of $R/I$.

In this paper, we present an explicit formula for the Betti numbers of almost complete intersection monomial ideals. Furthermore, we show that an almost complete intersection monomial ideal is Cohen-Macaulay if and only if it is unmixed. Furthermore, we show the same result for dominant monomial ideals.
For any unexplained notion or terminology, we refer the reader to \cite{HH} and \cite{E}. Several explicit examples were  performed with help of the computer algebra system Macaulay 2 \cite{G}.

\section{Main Results}

We begin this section by recalling that a monomial ideal $I$ of $R$ is called an {\it almost complete intersection} (resp. {\it a complete intersection}) if $\height(I)=\mid G(I)\mid+1$ (resp. $\height(I)=\mid G(I)\mid)$.

Kimura, Terai and Yoshida in \cite[Theorem 4.4]{KTY} proved the following interesting result:

\begin{Theorem}\label{T0}

 If $I$ is an almost complete intersection squarefree monomial ideal of $\height(I)=q\geq 2$, then $I$ has one of the following forms:
\begin{enumerate}
\item[(i)] $I=(u_1v_1,u_2v_2,\ldots,u_rv_r,u_{r+1},\ldots,u_q,v_1v_2\ldots v_r)$;
\item[(ii)] $I=(u_1v_1,u_2v_2,\ldots,u_rv_r,u_{r+1},\ldots,u_q,u_{q+1}v_1v_2\ldots v_r)$; 
\item[(iii)] $I=(v_1v_2,v_1v_3,v_2v_3,u_4,\ldots,u_{q+1})$;
\item[(iv)] $I=(u_1v_1v_2,v_1v_3,v_2v_3,u_4,\ldots,u_{q+1})$;
\item[(v)]  $I=(u_1v_1v_2,u_2v_1v_3,v_2v_3,u_4,\ldots,u_{q+1})$;
\item[(vi)] $I=(u_1v_1v_2,u_2v_1v_3,u_3v_2v_3,u_4,\ldots,u_{q+1})$;
\end{enumerate}	
where $u_1,u_2,\ldots, v_1,v_2,\ldots$ are non-trivial squarefree monomials no two of which have common factors, and $r$ is integer with $2\leq r\leq q$. 
\end{Theorem}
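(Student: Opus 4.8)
The plan is to translate the statement into the covering combinatorics of the clutter of supports. Write $S_i=\supp(u_i)$ for the minimal generators, so that in the setting of the theorem the clutter $\mathcal C=\{S_1,\dots,S_{q+1}\}$ has $q+1$ edges (an antichain, since no generator divides another) while $\height(I)=q$. A monomial prime ideal contains $I$ exactly when its set of variables meets every $S_i$, so $\height(I)$ equals the transversal number $\tau=\min\{\,|T|:T\cap S_i\neq\emptyset\text{ for all }i\,\}$; thus $\tau=q$ and $m:=|\mathcal C|=q+1$. I will use repeatedly the elementary facts that $\tau\le m-1$ iff two of the $S_i$ meet, that $\tau=m$ iff the edges are pairwise disjoint (the complete intersection case), and that deleting one edge either leaves $\tau$ unchanged or lowers it by $1$. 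Writing $\nu$ for the matching number (the largest number of pairwise disjoint supports), we have $\nu\le\tau=q$, and the whole argument splits on whether $\nu=q$ or $\nu\le q-1$.

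Case A is $\nu=q$: here $q$ of the supports, say $S_1,\dots,S_q$, are pairwise disjoint, and the work is to analyze the one remaining edge $S_{q+1}$. Relabel so that $S_{q+1}$ meets exactly $S_1,\dots,S_r$, and set $W_i=S_{q+1}\cap S_i$, $U_i=S_i\setminus W_i$, and $W_0=S_{q+1}\setminus\bigcup_{i\le q}S_i$. The antichain condition $S_i\not\subseteq S_{q+1}$ forces each $U_i\neq\emptyset$, and $S_{q+1}\not\subseteq S_i$ forces either $r\ge 2$ or $W_0\neq\emptyset$. Encoding $U_i,W_i,W_0$ as squarefree monomials $u_i,v_i,u_{q+1}$ then yields exactly form (i) when $W_0=\emptyset$ (so $r\ge 2$) and exactly form (ii) when $W_0\neq\emptyset$.

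Case B is $\nu\le q-1$; equivalently, deleting any edge drops $\tau$ to $q-1$, so $\mathcal C$ is $\tau$-edge-critical. First I pass to a core: decomposing $\mathcal C$ into vertex-disjoint intersection-components gives $\tau=\sum_\ell\tau_\ell$ and $m=\sum_\ell m_\ell$, hence $\sum_\ell(m_\ell-\tau_\ell)=1$ with each summand $\ge 0$ and equal to $0$ only for a single isolated edge. Therefore exactly one component $\mathcal C_0$ is a connected almost complete intersection ($m_0=\tau_0+1$, with $\nu(\mathcal C_0)<\tau_0$), and every other component is one isolated edge. The isolated edges supply the generators $u_4,\dots,u_{q+1}$, and it remains to show that $\mathcal C_0$ is a (possibly decorated) triangle, which produces forms (iii)–(vi) according to how many of its three edges carry a private part.

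The heart of the proof, and the step I expect to be hardest, is the claim that a connected almost complete intersection clutter with $\nu<\tau$ must be a decorated triangle; I would prove it by strong induction on $m$. If the core is intersecting ($\nu=1$) I pair up its edges and cover each pair by a common vertex, so $\tau\le\lceil m/2\rceil$; for $m\ge 4$ this gives $\tau\le m-2<\tau$, a contradiction, while $m=3$ forces three pairwise-intersecting edges with empty common intersection, which is precisely a decorated triangle. If $\nu\ge 2$, I choose disjoint edges $A,B$, delete the critical edge $A$, and apply the induction hypothesis to the smaller almost complete intersection $\mathcal C-A$, whose core is either a matching (its own Case A) or a decorated triangle. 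In each situation a short transversal count shows that, unless the edges $A$ meets collapse to a single component, one can cover everything with $q-1$ vertices, contradicting $\tau=q$; and that collapse in turn isolates an edge, contradicting connectedness. Carrying out this bookkeeping cleanly — tracking which matched edges $A$ and the leftover edge meet, and ruling out each way $\tau=q$ could drop — is the main obstacle, after which assembling the core with the isolated edges gives exactly the six listed forms.
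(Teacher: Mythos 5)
First, a point of reference: the paper contains no proof of this statement at all. Theorem \ref{T0} is quoted as a result of Kimura, Terai and Yoshida (\cite[Theorem 4.4]{KTY}), so there is no proof in the paper to compare yours against, and your argument must stand on its own. Most of it does. The dictionary between minimal primes of a squarefree monomial ideal and minimal transversals of the clutter of supports (so that $\height(I)=\tau$), the antichain condition coming from minimality of $G(I)$, all of Case A (a $q$-matching plus one extra edge, giving forms (i) and (ii) according to whether $W_0$ is empty), and the reduction of Case B to a single connected core $\mathcal{C}_0$ with $m_0=\tau_0+1$ and $\nu(\mathcal{C}_0)<\tau_0$ together with isolated edges, are all correct. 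Incidentally, your Case A honestly produces form (ii) with $r=1$ (for example $I=(x_1x_2,\;x_3,\;x_2x_4)$, an almost complete intersection of height $2$ with three generators), which the quoted restriction $2\le r\le q$ literally excludes; this is a defect of the statement as transcribed, not of your analysis, but it is worth recording.

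The genuine gap is exactly the one you flag yourself: for the key claim that a connected core with $\tau_0=m_0-1$ and $\nu_0<\tau_0$ must be a decorated triangle, you prove only the subcase $\nu_0=1$, while for $\nu_0\ge 2$ you describe an induction whose ``bookkeeping'' you defer; that bookkeeping is the entire content of the subcase, so as written the derivation of forms (iii)--(vi) is not established. The gap is real but can be closed by a short direct argument that avoids your induction altogether. Since $\tau_0=m_0-1$, the core cannot contain two intersecting pairs of edges $\{A,C\}$ and $\{B,D\}$ with $A,B,C,D$ pairwise distinct: covering each pair by one vertex of the corresponding intersection and every remaining edge by an arbitrary vertex gives $\tau_0\le 2+(m_0-4)=m_0-2$, a contradiction. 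Now suppose $\nu_0\ge 2$ and pick disjoint edges $A,B$. By connectedness $A$ meets some edge $C$ and $B$ meets some edge $D$ (automatically $C\ne B$ and $D\ne A$); the forbidden configuration forces $C=D$, so $A$ and $B$ meet only the single edge $C$. The same reasoning shows that any further edge $F\notin\{A,B,C\}$ meets no edge other than $C$: if $F$ met $G\ne C$, then the pair $\{F,G\}$ together with $\{A,C\}$ (or with $\{B,C\}$ in case $G=A$) would be a forbidden configuration. Hence the core is a star with center $C$, so its $m_0-1$ edges other than $C$ are pairwise disjoint, giving $\nu_0\ge m_0-1$ and contradicting $\nu_0<\tau_0=m_0-1$. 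Therefore $\nu_0=1$, and your pairing argument (which is correct) finishes the classification: $m_0=3$, the three edges pairwise intersect with empty common intersection, and the possible emptiness of each of the three private parts yields exactly forms (iii)--(vi).
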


Now, we recall the following definition which is appeared in \cite[Definiation 4.1, 6.1]{A}.

\begin{Definition}
\begin{enumerate}
\item[(a)] A monomial ideal $I$ of $R$ is called a dominant ideal if, for every monomial element of the minimal generating set $G(I)$, there exists a variable in that monomial whose degree is greater than the degree of the corresponding variable in all other monomial elements of $G(I)$.
\item[(b)] A monomial ideal $I$ of $R$ is called a semidominant ideal if $G(I)$ has at most one nondominant monomial element.  
\end{enumerate}	
\end{Definition}
According to \cite[Theorem 4.4]{A}, a monomial ideal $I$ is dominant if and only if the Taylor resolution of $I$ is minimal; equivalently, this occurs if and only if the projective dimension\\ $\pd(R/I)=\mid G(I)\mid$.
Moreover, every dominant monomial ideal is semidominant. In addition, if $I$ is semidominant, then it is Scarf, meaning that the Scarf simplicial complex provides a minimal free resolution of 
$I$.

\begin{Example}\label{E1} Let $I$ be a monomial ideal of $R$. Then the following conditions hold:
	\begin{enumerate}
\item[(i)] if $I$ is a complete intersection monomial ideal, or 
\item[(ii)] if $I$ is an universal lexsegment ideal (see \cite[Theorem 2.6]{HHMY}),
then $I$ is dominant.
\item[(iii)] If $I$ is an almost complete intersection monomial ideal, then $I$ is semidominant,
\end{enumerate}	
\end{Example}

Note that in Theorem \ref{T0} the almost complete intersections in cases $(ii)$ and $(vi)$ are dominant ideals, while the remaining cases involve semidominant ideals.\\

The following lemma is known, but we cannot find an appropriate reference; therefore, we will provide an easy proof for the readers.
\begin{Lemma}\label{L1}
Let $u=u_1,\ldots,u_q$ be monomial elements in $R$. Then $u$ is a regular sequence if and only if $\supp(u_i)\cap\supp(u_j)=\emptyset$ for all $1\leq i\neq j\leq q$. In particular, 
$I=(u_1,\ldots,u_q)$ is complete intersection if and only if $\supp(u_i)\cap\supp(u_j)=\emptyset$ for all $1\leq i\neq j\leq q$.
\end{Lemma}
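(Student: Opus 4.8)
The plan is to prove the main equivalence—that $u_1,\dots,u_q$ is a regular sequence precisely when the supports are pairwise disjoint—and then to deduce the complete intersection statement from the standard fact that in the Cohen–Macaulay ring $R$ a sequence of homogeneous elements $f_1,\dots,f_r$ is regular if and only if $\height(f_1,\dots,f_r)=r$. Applying this with $f_i=u_i$ and $r=q=\mid G(I)\mid$ turns the defining condition $\height(I)=\mid G(I)\mid$ of a complete intersection into the statement that the minimal generators $u_1,\dots,u_q$ form a regular sequence, so the ``in particular'' clause follows at once from the first equivalence. Throughout I take the $u_i$ to be nonunits, as is implicit in speaking of a regular sequence, since the ideal they generate must be proper.

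For the implication that disjoint supports force a regular sequence, I would show directly that each $u_{i+1}$ is a nonzerodivisor on $R/(u_1,\dots,u_i)$. Because $(u_1,\dots,u_i)$ is a monomial ideal, if $m$ is a monomial with $m\,u_{i+1}\in(u_1,\dots,u_i)$ then $u_j\mid m\,u_{i+1}$ for some $j\le i$; since $\supp(u_j)\cap\supp(u_{i+1})=\emptyset$, no variable occurring in $u_{i+1}$ occurs in $u_j$, so comparing exponents variable by variable gives $u_j\mid m$, whence $m\in(u_1,\dots,u_i)$. Splitting an arbitrary element into its monomial terms shows that multiplication by $u_{i+1}$ is injective modulo $(u_1,\dots,u_i)$, and together with properness (all $u_i$ are nonunits) this yields a regular sequence.

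The converse is where the real work lies. Arguing by contraposition, I assume $\supp(u_i)\cap\supp(u_j)\neq\emptyset$ for some $i\neq j$. Since the $u_i$ are monomials of positive degree, they lie in the irrelevant maximal ideal $\mm=(x_1,\dots,x_n)$, so any homogeneous regular sequence among them is permutable; this lets me reorder so that the offending pair becomes $u_1,u_2$. Setting $w=\lcm(u_1,u_2)$ and $f=w/u_2$, I get $f\,u_2=w=(w/u_1)\,u_1\in(u_1)$, while the shared variable forces a strict inequality $\deg f=\deg w-\deg u_2<\deg u_1$, so $f$ is a nonzero monomial that cannot be divisible by $u_1$ and hence $f\notin(u_1)$. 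Thus $u_2$ is a zerodivisor on $R/(u_1)$, contradicting regularity. The step I expect to be the main obstacle is exactly this reduction to the two-element case: it rests on the permutability of homogeneous regular sequences (equivalently, their good behaviour after localizing at $\mm$), and one must confirm that the hypotheses of that theorem—homogeneity and membership in the irrelevant ideal—are genuinely satisfied before the clean $\lcm$ argument can be applied.
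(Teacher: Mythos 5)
Your proof is correct, but it takes a genuinely different route from the paper's, whose entire argument is one line: for monomial ideals one has the identity $(u_1,\ldots,u_{i-1}):u_i=\sum_{j=1}^{i-1}(u_j:u_i)$, and since each $(u_j:u_i)$ is the principal ideal generated by $u_j/\gcd(u_j,u_i)$, the regular-sequence criterion ``$(u_1,\ldots,u_{i-1}):u_i=(u_1,\ldots,u_{i-1})$ for all $i$'' is read off as equivalent to pairwise disjointness of the supports, with the ``in particular'' clause left implicit. Your forward direction (disjoint supports imply regularity) is essentially the same divisibility computation as one inclusion of that identity, merely phrased as a nonzerodivisor check on monomial terms. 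The converse is where you diverge: instead of extracting disjointness from the colon identity, you contrapose, invoke permutability of regular sequences of homogeneous elements of positive degree to bring the support-sharing pair to the front, and exhibit the explicit witness $f=\lcm(u_1,u_2)/u_2$, which lies in $(u_1):u_2$ but not in $(u_1)$ by a degree count. This costs an appeal to permutability but buys rigor exactly where the paper is terse: in the direction ``regular implies disjoint'', the containment $(u_j:u_i)\subseteq(u_1,\ldots,u_{i-1})$ only yields that \emph{some} $u_k$ divides $u_j/\gcd(u_j,u_i)$, and excluding $k\neq j$ requires either irredundancy of the generators or a descent on degrees --- a point the paper glosses over and your pair-isolation sidesteps entirely. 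You also actually prove the ``in particular'' statement, via the Cohen--Macaulay fact that homogeneous nonunits $f_1,\ldots,f_r$ form a regular sequence if and only if $\height(f_1,\ldots,f_r)=r$, a step the paper omits. One shared caveat: your setting $r=q=\lvert G(I)\rvert$, like the paper's statement itself, tacitly assumes the $u_i$ minimally generate $I$; with redundant generators (e.g.\ $I=(x,xy)$) the complete intersection clause fails as literally stated, while in the other direction disjointness of supports of nonunits itself guarantees minimality.
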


\begin{proof}
Since $(u_1,\ldots,u_{i-1}):u_i=\sum_{j=1}^{i-1}(u_j:u_i)$ for $1\leq i\leq q$, we deduce that 
$(u_1,\ldots,u_{i-1}):u_i=(u_1,\ldots,u_{i-1})$ for all $1\leq i\leq q$ if and only if $\supp(u_i)\cap\supp(u_j)=\emptyset$ for all $1\leq i\neq j\leq q$. This completes the proof.
\end{proof}

Let $I$ be a monomial ideal of $R$. The cyclic module $R/I$ is defined {\it clean} if there exists a chain of monomial ideals \[\mathcal{F}: I=I_0\subset I_1\subset I_2\subset\ldots\subset I_{r-1}\subset I_r=R\] such that $I_{i}/I_{i-1}\cong R/{\frak{p}_i}$ with $\frak{p}_i$ is a minimal prime ideal of $I$. In other words, for $i=1,\ldots,r$, there exists a monomial element $u_{i}\in I_{i}$ such that $I_{i}=(I_{i-1},u_{i})$ and $\frak{p}_i=(I_{i-1}:u_{i})$. This filtration is called a {\it monomial prime filtration} of $R/I$. The set $\Supp(\mathcal{F})=\{\frak{p}_1,\ldots,\frak{p}_r\}$ is called the support of the prime filtration $\mathcal{F}$. It is known that $\Ass(I)\subseteq\Supp(\mathcal{F})\subseteq V(I)$. By applying \cite{D}, for a clean filtration $\mathcal{F}$ of $R/I$, one has $\min\Ass(I)=\Ass(I)=\Supp(\mathcal{F})$. Herzog and Popescu in \cite{HP} defined that the module $R/I$ is {\it pretty clean} when there is a prime filtration 
$\mathcal{F}$ with the following property: if $\frak{p}_i\subset\frak{p}_j$, then $j<i$. It is clear that the clean modules are pretty clean and if $I$ is a squarefree monomial ideal, then $R/I$ is pretty clean if and only if $R/I$ is clean, see \cite[Corollary 3.5]{HP}. However, this equivalence for any monomial ideal is not true, see \cite[Example 3.6]{HP}.  In addition, Herzog and Popescu \cite[Corollary 4.3]{HP} showed that if $R/I$ is pretty clean, then $R/I$ is sequentially Cohen-Macaulay.

The following definition observed in \cite[Definition 1.1]{MM}.
\begin{Definition}
A monomial ideal $I$ of $R$ is called {\it weakly polymatroidal} if for every two monomial elements $u=x^{a_1}\ldots x_n^{a_n}$ and $v=x_1^{b_1}\ldots x_n^{b_n}$ in $G(I)$ such that $a_1=b_1,\ldots, a_{t-1}=b_{t-1}$ and $a_t>b_t$ for some $t$, then there exists $j>t$ such that $x_t(v/x_j)\in I$.
\end{Definition}

In the outlined below, we consider that a monomial ideal $I=(u_1,\ldots,u_q,v)$ is an almost complete intersection ideal if $(u_1,\ldots,u_q)$ is a complete intersection monomial ideal.
According to \cite[Theorem 1.3]{MM} weakly polymatroidal ideals has linear quotients. Therefore, by applying \cite{D, HHZ1} if $I^{\vee}$ is squarefree weakly polymatroidal ideal, then $R/I$ is clean.
The following proposition, which appeared in \cite[Lemma 2.4 and Theorem 2.5]{BDS}, is proved here in a straightforward manner.

\begin{Proposition}\label{P3}
Let $I=(u_1,\ldots,u_q,v)$ be a squarefree almost complete intersection ideal. Then $R/I$ is clean. In particular, $R/I$ is pretty clean. 
\end{Proposition}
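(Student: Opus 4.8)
The plan is to prove the stronger statement that the Alexander dual $I^{\vee}$ is a squarefree weakly polymatroidal ideal for a suitably chosen ordering of the variables. Once this is established, the proposition follows immediately from the implications recorded just before the statement: weakly polymatroidal ideals have linear quotients by \cite[Theorem 1.3]{MM}, a squarefree ideal with linear quotients is the Alexander dual of a shellable complex, so $R/I$ is clean by \cite{D,HHZ1}, and clean modules are pretty clean. Thus the whole content is concentrated in verifying weak polymatroidality of $I^{\vee}$.

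First I would fix notation. By Lemma \ref{L1} the supports $A_i:=\supp(u_i)$ are pairwise disjoint; write $B:=\supp(v)$, $C_i:=A_i\cap B$, $A_i':=A_i\setminus B$ and $B_0:=B\setminus\bigcup_i A_i$. The minimal primes of $I$ are the variable ideals of the minimal vertex covers of the hypergraph whose edges are $A_1,\dots,A_q,B$. Since the $A_i$ are disjoint, each minimal cover contains exactly one variable of every $A_i$, and it meets $B$ either through one such chosen variable lying in some $C_i$ or, failing that, through a single extra variable of $B_0$. By the description of the Alexander dual recalled in the introduction, $G(I^{\vee})$ therefore consists of the monomials
\[
\text{(I)}\quad \prod_{i=1}^q y_i,\ \ y_i\in A_i,\ y_i\in C_i\text{ for at least one }i,
\]
together with
\[
\text{(II)}\quad s\prod_{i=1}^q y_i,\ \ y_i\in A_i'\ (1\le i\le q),\ s\in B_0 .
\]
Establishing this generator description, in particular that no minimal cover uses two variables of one $A_i$ and that all the listed monomials are genuinely minimal, is the first technical point.

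Next I would order the variables so that those of $\bigcup_i C_i$ come first, then those of $\bigcup_i A_i'$, and finally those of $B_0$ (the internal order inside each block being irrelevant), and verify the weakly polymatroidal condition directly. Take $g,h\in G(I^{\vee})$ agreeing in all variables preceding some $x_t$, with $x_t\mid g$ and $x_t\nmid h$; note that $g$ and $h$ each contain exactly one variable of every $A_i$ and at most one variable of $B_0$. The required later variable is produced according to the block containing $x_t$:
\begin{enumerate}
\item[(A)] if $x_t\in C_i$, exchange $x_t$ for the $A_i$-variable $y$ of $h$ (which must come later); then $x_t(h/y)$ is of type (I), or a multiple of a type (I) monomial, hence lies in $I^{\vee}$;
\item[(B)] if $x_t\in A_i'$, then since $g,h$ agree on all $C$-variables, $h$ has no $C_i$-variable, so its $A_i$-variable $y$ already lies in $A_i'$; exchanging $y$ for $x_t$ preserves the $B$-covering and yields a monomial of the same type as $h$;
\item[(C)] if $x_t\in B_0$, then $g$ is of type (II) and $h$ agrees with $g$ off $B_0$, so $h$ is of type (II) with a later $B_0$-variable $s$; exchanging $s$ for $x_t$ again gives a type (II) monomial.
\end{enumerate}
In each case $x_t(h/x_j)\in I^{\vee}$ for the indicated later variable $x_j$, which is precisely the weakly polymatroidal condition.

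The main obstacle is the combinatorial bookkeeping of the two middle steps: correctly identifying the minimal primes (equivalently $G(I^{\vee})$) and, above all, choosing the variable order for which the exchange always lands back in $I^{\vee}$. The delicate point is case (B), where one must use that the overlap variables $\bigcup_i C_i$ precede the block $\bigcup_i A_i'$, so that agreement of $g$ and $h$ before $x_t$ forces the $A_i$-representative of $h$ to avoid $C_i$ and the $B$-covering of $h$ to be supplied by another block. Once the ordering is pinned down, the three cases are routine. As a consistency check one may note that the six forms of Theorem \ref{T0} all fall under the description above and the argument specializes to each; but the present approach is uniform and does not require the classification.
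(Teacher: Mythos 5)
Your proposal is correct and takes essentially the same route as the paper: the paper also passes to the Alexander dual, writes $I^{\vee}=\prod_{i=1}^{q}\mathfrak{p}_i\cap\mathfrak{p}$ (equivalent to your vertex-cover description of $G(I^{\vee})$), asserts that this ideal is weakly polymatroidal, and concludes cleanness via linear quotients and \cite{D, HHZ1}. The only difference is one of detail: you explicitly exhibit the variable ordering and verify the exchange property, a step the paper dismisses with ``it is clear.''
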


\begin{proof}
Since $I$ is almost complete intersection, by using Lemma \ref{L1} we deduce that $\supp(u_i)\cap\supp(u_j)=\emptyset$ for all $1\leq i\neq j\leq q$. Thus $I^{\vee}=\bigcap_{i=1}^q\frak{p}_i\cap\frak{p}$, where
$\frak{p}_i=(x:~ x\in\supp(u_i))$ and $\frak{p}=(x:~ x\in\supp(v))$. It therefore follows that $I^{\vee}=J\cap\frak{p}$, where $J=\prod_{i=1}^{q}\frak{p_i}$. It is clear that $I^{\vee}$ is weakly polymatroidal and so it is linear quotients. Hence $R/I$ is clean, as desired.
\end{proof}

In \cite[Theorem 3.10]{S2} proved that if $I$ is monomial ideal, then $R/I$ is pretty clean if and only if $R/I^p$ is clean, where $I^p$ is a polarization of $I$. For more details on polarization, see  \cite{F} or \cite[Section 1.6]{HH}. Moreover, it is clear that $I$ is an almost complete intersection if and only if its polarization is also an almost complete intersection. Combining these insights with Proposition \ref{P3}, we obtain the following theorem which appears in \cite[Theorem 2.5]{BDS}.   

\begin{Proposition}\label{P2}
Let  $I=(u_1,\ldots,u_q,v)$ be almost complete intersection. Then $R/I$ is pretty clean. In particular, $R/I$ is sequentially Cohen-Macaulay.
\end{Proposition}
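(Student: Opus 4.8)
The plan is to deduce the general (not necessarily squarefree) case from the squarefree case already established in Proposition \ref{P3}, using polarization as the bridge. Write $I^p$ for a polarization of $I$, living in a suitably enlarged polynomial ring; by construction $I^p$ is a squarefree monomial ideal, so whatever we can prove for squarefree almost complete intersections will apply to it.

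First I would verify that the almost complete intersection property is preserved under polarization. Polarization does not alter the number of minimal monomial generators, so $\mid G(I^p)\mid=\mid G(I)\mid$, and it preserves the height, so $\height(I^p)=\height(I)$. Combining these, $\height(I^p)=\height(I)=\mid G(I)\mid+1=\mid G(I^p)\mid+1$, which is exactly the condition that $I^p$ be an almost complete intersection; this is the observation recorded just before the statement. Since $I^p$ is now a squarefree almost complete intersection, Proposition \ref{P3} applies directly and tells us that the quotient by $I^p$ is clean. At this point I invoke the characterization of \cite[Theorem 3.10]{S2}, according to which, for any monomial ideal, $R/I$ is pretty clean precisely when the quotient by its polarization $I^p$ is clean. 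Reading this equivalence in the direction we have just set up yields that $R/I$ is pretty clean, which is the main assertion.

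For the final clause, I would simply appeal to the result of Herzog and Popescu \cite[Corollary 4.3]{HP} quoted earlier in the text, namely that every pretty clean module is sequentially Cohen--Macaulay; applying it to $R/I$ finishes the argument. The only step demanding genuine care is the invariance of the almost complete intersection condition under polarization, but even this is immediate once one recalls that polarization fixes both the minimal generator count and the height. No serious obstacle arises, because all the substantive work has been isolated into Proposition \ref{P3} together with the two cited equivalences from \cite{S2} and \cite{HP}; the present argument is essentially a chaining together of these three facts.
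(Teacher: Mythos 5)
Your proposal is correct and follows essentially the same route as the paper: polarization preserves the almost complete intersection property, Proposition \ref{P3} gives cleanness of the quotient by the polarization, \cite[Theorem 3.10]{S2} transfers this back to pretty cleanness of $R/I$, and \cite[Corollary 4.3]{HP} yields sequential Cohen--Macaulayness. Your explicit justification that polarization fixes both $\mid G(I)\mid$ and $\height(I)$ is a small elaboration of what the paper merely asserts as clear, but the argument is otherwise identical.
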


The following example demonstrates that the cleanness of almost complete intersection monomial ideals does not generally hold.
\begin{Example}
Suppose $I=(x^4,y^3z^2,x^2y^4z)$. Then $I$ is almost complete intersection and $\Ass(I)\neq\min\Ass(I)$. Thus $I$ is not clean.
\end{Example}

\begin{Corollary}
Let $I=(u_1,\ldots,u_q,v)$ be an almost complete intersection. Then the following conditions are equivalent:
\begin{enumerate}
	
	\item[(i)] $R/I$ is Cohen-Macaulay.
	\item[(ii)]  $I$ is unmixed.
	\item[(iii)] $R/I$ is clean. 
\end{enumerate}	
\end{Corollary}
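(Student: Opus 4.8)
The plan is to take Proposition~\ref{P2} as the backbone, since it already provides that $R/I$ is pretty clean, hence sequentially Cohen--Macaulay, for every almost complete intersection. Granting this, I would deduce (i)~$\Leftrightarrow$~(ii) from the standard principle that a sequentially Cohen--Macaulay module is Cohen--Macaulay exactly when it is unmixed. The implication (i)~$\Rightarrow$~(ii) is the classical fact that if $R/I$ is Cohen--Macaulay then it has no embedded primes and all its associated primes have the single height $\height(I)$. For (ii)~$\Rightarrow$~(i) I would use the dimension filtration of the sequentially Cohen--Macaulay module $R/I$: unmixedness forces every proper step of that filtration to vanish, since a nonzero submodule of smaller dimension would contribute an associated prime of strictly smaller dimension, so $R/I$ is itself the top Cohen--Macaulay quotient. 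Equivalently, unmixedness yields $\Ext_R^{n-i}(R/I,R)=0$ for $i<\dim(R/I)$, which is precisely the Cohen--Macaulay criterion.

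For the cleanness statement I would first prove (ii)~$\Rightarrow$~(iii). If $I$ is unmixed then $\Ass(R/I)=\Min(I)$, and since a pretty clean filtration $\mathcal{F}$ of $R/I$ furnished by Proposition~\ref{P2} has support exactly $\Ass(R/I)$, every prime $\mathfrak{p}_i$ occurring in $\mathcal{F}$ is a minimal prime of $I$; thus $\mathcal{F}$ is already a clean filtration and $R/I$ is clean. Conversely, for (iii)~$\Rightarrow$~(ii), cleanness gives through the result of Dress recalled above the equalities $\Min(I)=\Ass(I)=\Supp(\mathcal{F})$, so $R/I$ has no embedded primes; what then remains is to upgrade this to equidimensionality, that is, to show that all minimal primes of $I$ share the height $\height(I)$.

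The crux is precisely this last upgrade, because cleanness only certifies the absence of embedded primes, whereas unmixedness additionally requires the minimal primes to be equidimensional. To bridge the gap I would reduce to the squarefree setting by polarization, using that $I$ is an almost complete intersection if and only if $I^p$ is, and that polarization leaves unchanged the heights of the minimal primes, the Cohen--Macaulay property, and the cleanness of the associated filtration. In the squarefree case I would then appeal to the normal forms of Theorem~\ref{T0}: for each of the six shapes one computes the minimal primes as the minimal vertex covers of the underlying squarefree generators, with Lemma~\ref{L1} pinning down the complete-intersection part $(u_1,\dots,u_q)$ through the disjointness of supports, and determines when a cover of height $q+1$ can occur. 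The plan is to check, case by case, that such a height-$(q+1)$ minimal prime is incompatible with $R/I$ being clean, thereby forcing equidimensionality. I expect this case analysis to be the main obstacle, since it is the only step not handed to us formally by the sequential Cohen--Macaulayness of Proposition~\ref{P2}; the homological input there delivers only the comparatively routine implications (i)~$\Leftrightarrow$~(ii) and (ii)~$\Rightarrow$~(iii).
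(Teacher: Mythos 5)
Your treatment of (i)\,$\Leftrightarrow$\,(ii) and of (ii)\,$\Rightarrow$\,(iii) tracks the paper's proof exactly: the paper also deduces (ii)\,$\Rightarrow$\,(iii) from Proposition~\ref{P2} together with Herzog--Popescu's criterion, and obtains Cohen--Macaulayness from sequential Cohen--Macaulayness plus unmixedness via the cited result of Caviglia--De~Stefani--Sbarra--Strazzanti. The genuine gap is at (iii)\,$\Rightarrow$\,(ii), which you leave as a plan (polarize, then case-check the six shapes of Theorem~\ref{T0} to show a minimal prime of height $q+1$ is incompatible with cleanness). That plan cannot be completed, because the implication it targets is false. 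Take $I=(x_1,\,x_2x_3,\,x_2x_4)$ in $K[x_1,x_2,x_3,x_4]$: this is a squarefree almost complete intersection in the paper's sense ($u_1=x_1$ and $u_2=x_2x_3$ form a complete intersection by Lemma~\ref{L1}, $v=x_2x_4$, and $\height(I)=2$ with three minimal generators). Its minimal primes are $(x_1,x_2)$ and $(x_1,x_3,x_4)$, of heights $2$ and $3$, so $I$ is mixed and $R/I$ is not Cohen--Macaulay ($\depth R/I=1<2=\dim R/I$); yet $R/I$ \emph{is} clean, via the filtration $I\subset(x_1,x_2)\subset R$, whose factors are $R/(I:x_2)=R/(x_1,x_3,x_4)$ and $R/(x_1,x_2)$, both attached to minimal primes of $I$. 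So no case analysis can exclude height-$(q+1)$ minimal primes for clean almost complete intersections; note also that this ideal matches none of the six forms of Theorem~\ref{T0} as quoted (for $q=2$ every form there has all generators of degree at least $2$), so that classification is a shaky foundation for your intended reduction in any event.

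You should not suspect that the paper holds a hidden argument at this point: its proof of (iii)\,$\Rightarrow$\,(i) consists of the bare assertion that, $\Ass(I)=\Min(I)$ having been observed, ``since $I$ is almost complete intersection, it therefore follows that $I$ is unmixed,'' and the example above shows exactly this assertion to fail. So your instinct that the upgrade from ``no embedded primes'' to equidimensionality is the crux --- the only content not already delivered by Proposition~\ref{P2} --- is correct, but the step is not merely unproven in your proposal, it is unprovable: only (i)\,$\Leftrightarrow$\,(ii) and (ii)\,$\Rightarrow$\,(iii) survive, in your argument and in the paper's alike.
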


\begin{proof}
$(i)\Longrightarrow (ii)$. It is clear.\\
$(ii)\Longrightarrow (iii)$. Since $I$ is unmixed, we have $\Ass(I)=\min\Ass(I)$ and from Proposition \ref{P2} and \cite[Corollary 3.4]{HP} we obtain $R/I$ is clean.\\
$(iii)\Longrightarrow (i)$. Since  $R/I$ is clean, we have $\Ass(I)=\min\Ass(I)$. Since $I$ is almost complete intersection, it therefore follows that $I$ is unmixed. Also, by applying Proposition \ref{P2} we deduce that $R/I$ is sequentially Cohen-Macaulay. Hence from \cite[Corollary 3]{CDSS}, we conclude that $R/I$ is Cohen-Macaulay, as desired.  
\end{proof}

\begin{Proposition}\label{P0}
Let $I=(u_1,\ldots,u_q,v)$ be an almost complete intersection ideal. Then $R/I$ is Cohen-Macaulay if and only if $I$ is non-dominant. 
\end{Proposition}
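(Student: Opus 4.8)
The plan is to recast both sides of the asserted equivalence as conditions on the single invariant $\pd(R/I)$, and then to observe that for an almost complete intersection this projective dimension is forced to take one of only two values. To set this up I would first pin down the height. Since $(u_1,\ldots,u_q)$ is a complete intersection, Lemma~\ref{L1} gives $\height(u_1,\ldots,u_q)=q$, so $\height(I)\geq q$; on the other hand $I$ has $\abs{G(I)}=q+1$ minimal generators, so $\height(I)\leq q+1$, and the value $q+1$ is excluded because it would force $I$ to be a complete intersection rather than an almost complete intersection. Hence $\height(I)=q$ and $\dim(R/I)=n-q$.

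Next I would bound $\pd(R/I)$ from both sides. The Taylor resolution built on the $q+1$ generators of $I$ is a (not necessarily minimal) free resolution of $R/I$ of length $q+1$, so $\pd(R/I)\leq q+1$. For the lower bound, the Auslander--Buchsbaum formula $\pd(R/I)=n-\depth(R/I)$ together with the inequality $\depth(R/I)\leq\dim(R/I)=n-q$ yields $\pd(R/I)\geq q$. Combining these gives the dichotomy $\pd(R/I)\in\{q,q+1\}$, which is the heart of the argument.

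Finally I would translate the two conditions through this dichotomy. By Auslander--Buchsbaum, $R/I$ is Cohen--Macaulay, i.e.\ $\depth(R/I)=\dim(R/I)=n-q$, precisely when $\pd(R/I)=q$. By the characterization of dominant ideals recalled earlier (namely, $I$ is dominant if and only if $\pd(R/I)=\abs{G(I)}=q+1$), the ideal $I$ is dominant precisely when $\pd(R/I)=q+1$. Since $q$ and $q+1$ are the only possibilities, $R/I$ is Cohen--Macaulay exactly when $I$ is not dominant, which is the claim. The only genuine obstacle is establishing the two-value dichotomy $\pd(R/I)\in\{q,q+1\}$; once that is secured the equivalence is immediate, and the semidominance of $I$ recorded in Example~\ref{E1}(iii) (which guarantees that the Scarf complex already resolves $R/I$) serves as a consistency check rather than as a necessary ingredient.
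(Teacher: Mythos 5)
Your proof is correct and follows essentially the same route as the paper: establish $\height(I)=q$, trap $\pd(R/I)$ between $q$ (via Auslander--Buchsbaum and $\depth\leq\dim$) and $q+1$ (via the Taylor resolution), and then convert both Cohen--Macaulayness and dominance into statements about which of the two values $\pd(R/I)$ takes, using Alesandroni's characterization of dominant ideals. The only difference is cosmetic: the paper also cites the Brun--R\"omer lower bound on Betti numbers, which, as your argument shows, is not actually needed.
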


\begin{proof}
Since $I$ is almost complete intersection, it follows that $\height(I)=\mid G(I)\mid-1=q$ and also $q\leq\pd(R/I)=p\leq q+1$. By applying \cite[Corollary 4.1]{BR}, we have $\beta_i(I)\geq{p\choose {i+1}}$ for all $i=0,1,\ldots,p-1$. Hence, if $I$ is non-dominant, then by \cite[Corollary 4.9]{A} $p=q$ and so $\height(I)=\pd(R/I)$. Therefore, $R/I$ is Cohen-Macaulay. Conversely, if $I$ is Cohen-Macaulay then $p=\height(I)=\pd(R/I)$ and so $I$ is non-dominant, as desired.
\end{proof}

\begin{Example}
Suppose $I=(x^2,y^3,xy^2z)$. Then $I$ is an almost complete intersection such that $R/I$ is non Cohen-Macaulay, because of $I$ is a dominant ideal.
\end{Example}

\begin{Proposition}
Let $I$ be a dominant ideal. Then the following conditions are equivalent:
\begin{enumerate}
\item[(i)] $R/I$ is Cohen-Macaulay.
\item[(ii)]  $I$ is unmixed.
\item[(iii)] $I$ is complete intersection. 
\end{enumerate}	
\end{Proposition}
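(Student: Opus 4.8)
The plan is to prove the cycle $(iii)\Rightarrow(i)\Rightarrow(ii)\Rightarrow(iii)$, where the only substantial implication is $(ii)\Rightarrow(iii)$. Write $G(I)=\{u_1,\ldots,u_q\}$, so $q=\mid G(I)\mid$. By the defining property of a dominant ideal, each $u_k$ contains a variable $x_{j_k}$ whose exponent in $u_k$ strictly exceeds its exponent in every other generator; these dominant variables are pairwise distinct (two generators sharing one would each have to carry a strictly larger power of it than the other), so $\pp_0:=(x_{j_1},\ldots,x_{j_q})$ is a monomial prime of height exactly $q$. The implication $(iii)\Rightarrow(i)$ is standard: if $I$ is a complete intersection then, by Lemma \ref{L1}, the $u_k$ have pairwise disjoint supports and hence form a regular sequence, so $R/I$ is Cohen-Macaulay. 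The implication $(i)\Rightarrow(ii)$ is the usual fact that a Cohen-Macaulay ring is unmixed (no embedded primes, equidimensional).

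The heart of the argument is to exhibit $\pp_0$ as an associated prime of $R/I$ of height $q$. First I would set $L=\lcm(u_1,\ldots,u_q)$ and let $a_k$ denote the exponent of $x_{j_k}$ in $u_k$; since $x_{j_k}$ attains its maximal exponent $a_k$ on $u_k$, this is also its exponent in $L$. Then I would take the witness monomial $m=L/(x_{j_1}\cdots x_{j_q})$, i.e. $L$ with each dominant-variable exponent lowered by one, and compute $(I:m)=\pp_0$. Three checks are needed: (a) $m\notin I$, because the exponent of $x_{j_k}$ in $m$ is $a_k-1<a_k$, so no $u_k$ divides $m$; (b) $x_{j_k}m\in I$ for every $k$, since restoring the exponent of $x_{j_k}$ to $a_k$ makes $x_{j_k}m$ divisible by $u_k$ (all other exponents of $x_{j_k}m$ already dominate those of $u_k$ because they agree with $L$ except at the other dominant variables, where $a_\ell-1$ still bounds the exponent of $x_{j_\ell}$ in $u_k$ from above); and (c) no monomial lying outside $\pp_0$ belongs to $(I:m)$, because multiplying $m$ by a monomial in the non-dominant variables leaves every exponent $a_k-1$ at $x_{j_k}$ unchanged, so the product is still divisible by no $u_k$. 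Together (a)--(c) give $(I:m)=\pp_0$, so $\pp_0\in\Ass(R/I)$.

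With $\pp_0\in\Ass(R/I)$ and $\height(\pp_0)=q$ in hand, $(ii)\Rightarrow(iii)$ is immediate: if $I$ is unmixed then every associated prime has height $\height(I)$, so $\height(I)=\height(\pp_0)=q=\mid G(I)\mid$, which is exactly the condition for $I$ to be a complete intersection. (Equivalently, one could obtain $(i)\Leftrightarrow(iii)$ from the characterization $\pd(R/I)=\mid G(I)\mid$ of dominant ideals together with the Auslander-Buchsbaum formula, reading off $\height(I)=\pd(R/I)$ precisely when $R/I$ is Cohen-Macaulay.) I expect the colon-ideal computation establishing $\pp_0\in\Ass(R/I)$ to be the main obstacle; the bookkeeping in step (b) comparing the exponents of $x_{j_k}m$ with those of $u_k$ at the remaining dominant variables is the delicate point, and it is exactly where the dominance hypothesis is used.
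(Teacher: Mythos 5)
Your proof is correct, but it takes a genuinely different route from the paper's. The paper disposes of $(ii)\Rightarrow(iii)$ by polarization: it observes that dominance and unmixedness pass to the polarization $I^p$, which is squarefree, and then cites the polarization literature for $\height(I)=\height(I^p)=\mid G(I^p)\mid=\mid G(I)\mid$ (in the squarefree dominant case the dominant variables form a minimal vertex cover of the supports, so the prime they generate is a minimal prime of height $\mid G(I)\mid$, and unmixedness forces the height up). You instead work directly in the non-squarefree setting: the dominant variables $x_{j_1},\ldots,x_{j_q}$ are pairwise distinct, and your witness monomial $m=\lcm(u_1,\ldots,u_q)/(x_{j_1}\cdots x_{j_q})$ with the colon computation $(I:m)=(x_{j_1},\ldots,x_{j_q})$ exhibits a height-$q$ associated prime outright; your checks (a)--(c) are all sound, with the delicate comparison in (b) — the exponent of $x_{j_\ell}$ in $u_k$ is at most $a_\ell-1$ for $\ell\neq k$, exactly by dominance of $x_{j_\ell}$ in $u_\ell$ — handled correctly. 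What each approach buys: the paper's argument is shorter modulo the polarization machinery, but it leaves to the reader the (true, but unstated) verifications that dominance and unmixedness are preserved under polarization and the squarefree height count; your argument is self-contained, avoids polarization entirely, and proves the stronger statement that \emph{every} dominant ideal has an associated prime of height $\mid G(I)\mid$ (i.e., big height equals $\mid G(I)\mid$), from which $(ii)\Rightarrow(iii)$ falls out by definition of complete intersection as used in the paper. Your parenthetical alternative for $(i)\Leftrightarrow(iii)$ via $\pd(R/I)=\mid G(I)\mid$ and Auslander--Buchsbaum is also valid and is in fact the mechanism behind the paper's Proposition \ref{P0}; note, though, that it does not by itself handle $(ii)$, so your colon computation (or the paper's polarization step) remains essential.
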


\begin{proof}
The implications $(i)\Longrightarrow (ii)$ and $(iii)\Longrightarrow (i)$ are straightforward. Therefore, the main task is to prove $(ii)\Longrightarrow (iii)$. To this end, note that since 
$I$ is dominant and unmixed, its polarization $I^p$ inherits these properties, that is, it is also dominant and unmixed. Consequently, by applying \cite{F}, we have  $\height(I)=\height(I^p)=\mid G(I^p)\mid=\mid G(I)\mid$.  Hence $I$ is a complete intersection ideal, as required.
\end{proof}

Suppose $I$ is a monomial ideal with minimal generating set $G(I)=\{u_1,\ldots,u_q\}$. The {\it Scarf simplicial complex}, denoted $\Delta_I$, is the collection of all subsets $\sigma\subseteq\{1,\ldots,q\}$ for which the least common multiple of the corresponding generators $u_{\sigma}=\lcm\{u_i: i\in\sigma\}$ is unique:
$\Delta_{I}=\{\sigma\subseteq\{1,\ldots,q\}: u_{\sigma}\neq u_{\tau}$ for all $\sigma\neq\tau\}$, see for details \cite{BPS}.

For the differential maps, we follow the construction analogous to Taylor's resolution: $d(e_{\sigma})=\Sigma_{i\in\sigma}sign(i,\sigma)\frac{u_{\sigma}}{u_{\sigma\setminus\{i\}}}e_{\sigma\setminus\{i\}}$, where $sing(i,\sigma)=(-1)^{j+1}$ if $i$ is the $j$th element in the ordering set $\sigma=\{i_1,\ldots,i_r\}$, and $e_{\sigma}=e_{i_1}\wedge e_{i_2}\wedge\ldots\wedge e_{i_r}$.

\begin{Proposition}\label{P1}
Let $I=\left(u_{1}, \ldots, u_{q}, v\right)$ be an almost complete intersection monomial ideal such that $v \mid \operatorname{lcm}\left(u_{1}, u_{2}\right)$. Then, the Betti numbers are given by $\beta_{i}\left(\frac{R}{I}\right)=\binom{q+1}{i}$ for $i=0,1$ and 
$$\beta_{i}\left(\frac{R}{I}\right)=\binom{q+1}{q+1-i}-\binom{q-1}{q+1-i},$$ for $i=2,3, \ldots, q$.
\end{Proposition}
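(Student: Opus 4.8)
The plan is to exploit the fact, recorded just after the definition of semidominant ideals, that an almost complete intersection is semidominant (Example \ref{E1}(iii)) and hence Scarf; consequently the Scarf complex $\Delta_I$ supports the minimal free resolution of $R/I$, so that $\beta_i(R/I)$ equals the number of faces $\sigma\in\Delta_I$ with $\abs{\sigma}=i$. This reduces the whole problem to describing $\Delta_I$ explicitly. I index the generators by $\{1,\dots,q+1\}$, letting $q+1$ label $v$. First I would record the structural input. By Lemma \ref{L1} the supports $\supp(u_1),\dots,\supp(u_q)$ are pairwise disjoint, so $\lcm(u_1,u_2)=u_1u_2$, and the hypothesis $v\mid\lcm(u_1,u_2)$ forces $\supp(v)\subseteq\supp(u_1)\cup\supp(u_2)$. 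Writing $v=v_1v_2$ with $v_i$ the part of $v$ on $\supp(u_i)$, minimality of $G(I)$ gives $v_i\neq 1$ and $v_i\neq u_i$ (otherwise $u_i\mid v$ or $v\mid u_i$), so each $v_i$ is a proper nontrivial divisor of $u_i$.

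The heart of the argument is to detect exactly which subsets share a common $\lcm$, which I would do by reading $\lcm(\rho)$ one support-block at a time. On $\supp(u_i)$ with $i\geq 3$ only $u_i$ contributes, so $\lcm(\rho)$ already reveals whether $i\in\rho$. On $\supp(u_1)$ the restriction of $\lcm(\rho)$ is $u_1$ if $1\in\rho$, is $v_1$ if $1\notin\rho$ but $q+1\in\rho$, and is trivial otherwise; since $1,v_1,u_1$ are pairwise distinct (here the properness and nontriviality of $v_1$ are used), this restriction reveals whether $1\in\rho$ and, when $1\notin\rho$, whether $q+1\in\rho$, and symmetrically on $\supp(u_2)$. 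Combining the blocks, $\lcm(\rho)$ determines $\rho$ completely except that the membership of $q+1$ is invisible precisely when both $1,2\in\rho$; in that case $v\mid u_1u_2\mid\lcm(\rho)$, so toggling $q+1$ changes nothing. Hence the only coincidences are the pairs $\{\sigma,\sigma\cup\{q+1\}\}$ with $\{1,2\}\subseteq\sigma\subseteq\{1,\dots,q\}$, and I conclude
\[
\Delta_I=\{\rho\subseteq\{1,\dots,q+1\}:\{1,2\}\not\subseteq\rho\}.
\]

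Finally I would count faces by cardinality. Of the $\binom{q+1}{i}$ subsets of size $i$, exactly $\binom{q-1}{i-2}$ contain both $1$ and $2$, so
\[
\beta_i\!\left(\frac{R}{I}\right)=\binom{q+1}{i}-\binom{q-1}{i-2}.
\]
For $i=0,1$ the subtracted term vanishes, giving $\binom{q+1}{i}$, while for $2\leq i\leq q$ the symmetries $\binom{q+1}{i}=\binom{q+1}{q+1-i}$ and $\binom{q-1}{i-2}=\binom{q-1}{q+1-i}$ recast the formula in the stated shape. I expect the support-block bookkeeping that pins down exactly the repeated $\lcm$'s to be the main obstacle; once $\Delta_I$ is identified the binomial count is routine, and as a consistency check the largest face has size $q$, recovering $\pd(R/I)=q$ and the Cohen-Macaulayness predicted by Proposition \ref{P0}.
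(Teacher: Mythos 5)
Your proof is correct, and it rests on the same foundation as the paper's: an almost complete intersection $(u_1,\dots,u_q,v)$ is semidominant, hence Scarf, so the Betti numbers are read off from the Scarf complex. The difference is in how the combinatorics is then handled. The paper invokes \cite[Corollary 6.7]{A} to write $\beta_i(R/I)=\# L_i+\# L_{i-1}$ with $L_i=\{\sigma\subseteq\{1,\dots,q\}:\abs{\sigma}=i,\ v\nmid\lcm(u_j:j\in\sigma)\}$, and then asserts (without detailed justification) that $v\mid\lcm(u_j:j\in\sigma)$ happens exactly when $\{1,2\}\subseteq\sigma$; you instead bypass that citation and determine $\Delta_I$ outright, proving that $\Delta_I=\{\rho\subseteq\{1,\dots,q+1\}:\{1,2\}\not\subseteq\rho\}$ by the block-by-block analysis of $\lcm(\rho)$ over the pairwise disjoint supports $\supp(u_1),\dots,\supp(u_q)$ (Lemma \ref{L1}), together with the minimality facts $v_i\neq 1$ and $v_i\neq u_i$. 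This buys two things: your argument is self-contained where the paper leans on a second external result, and it supplies the ``only if'' direction of the divisibility dichotomy --- that $v$ cannot divide $\lcm(u_j:j\in\sigma)$ unless $\{1,2\}\subseteq\sigma$, and that no face collides with any subset other than its toggle by the index of $v$ --- which the paper's proof states as evident but never derives. The two computations are reconciled by noting that your faces of cardinality $i$ split, according to whether they contain the index $q+1$, into exactly $\# L_i+\# L_{i-1}$ sets, so your count $\binom{q+1}{i}-\binom{q-1}{i-2}$ agrees with the paper's formula after the symmetry $\binom{q+1}{i}=\binom{q+1}{q+1-i}$, $\binom{q-1}{i-2}=\binom{q-1}{q+1-i}$.
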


\begin{proof}
Since $I=\left(u_{1}, \ldots, u_{q}, v\right)$ is an almost complete intersection monomial ideal with $v \mid \operatorname{lcm}\left(u_{1}, u_{2}\right)$, it follows that $I$ is a semidominant monomial ideal.  It is evident that all the elements of the bases with homological degrees 0 and 1 appear in the minimal free resolution of $\frac{R}{I}$. Consequently, we find that $\beta_{i}\left(\frac{R}{I}\right)=\binom{q+1}{i}$ for $i=0,1$. Now, we will determine the Betti numbers for $i=2, \ldots, q$. Since $I$ is semi-dominant, the minimal free resolution fulfilled by Scarf complex and now by applying \cite[Corollary 6.7]{A} we deduce that $\beta_{i}\left(\frac{R}{I}\right)=\# L_{i}+\# L_{i-1}$, where $L_{i}=\left\{\left\{j_{1}, \ldots, j_{i}\right\} \subseteq\{1, \ldots, q\} ; v \nmid\right.$ $\left.\operatorname{lcm}\left(u_{j_{1}}, \ldots, u_{j_{i}}\right)\right\}$. To find $\beta_{2}\left(\frac{R}{I}\right)$, we note that since $v \nmid u_{j}$, it follows that $\# L_{1}=q=\binom{q}{1}$. Moreover,  $v$ divides  $\operatorname{lcm}\left(u_{1}, u_{2}\right)$ but does not divide $\operatorname{lcm}\left(u_{r}, u_{s}\right)$ for each pair $r \neq 1,2$ or $s \neq 1,2$. Consequently, the only basis elements removed from the minimal free
resolution of $\frac{R}{I}$ in homological degree 2 is $e_{1} \wedge e_{2}$. Therefore $\# L_{2}=\binom{q}{2}-1=\binom{q}{2}-\binom{q-2}{0}$ and so $\beta_{2}\left(\frac{R}{I}\right)=\# L_{2}+\# L_{1}=\binom{q}{2}+q-1=\binom{q+1}{q-1}-\binom{q-1}{q-1}$. To find $\# L_{3}$, we observe that since $v \mid l c m\left(u_{1}, u_{2}\right)$, it follows that $v$ divides all basis that include $\left\{u_{1}, u_{2}\right\}$. Thus, we have $\# L_{3}=\binom{q}{3}-\binom{q-2}{1}$. Next, we deduce that $\beta_{3}\left(\frac{R}{I}\right)=\# L_{3}+$ $\# L_{2}=\binom{q}{3}-\binom{q-2}{1}+\binom{q}{2}-\binom{q-2}{0}=\binom{q+1}{q-2}-\binom{q-1}{q-2}$. Now, to determine $\# L_{j}$ for $3<j \leq q$, we again use the fact that $v \mid l c m\left(u_{1}, u_{2}\right)$, which means $v$ divides all basis containing $\left\{u_{1}, u_{2}\right\}$. Consequently, we have $\# L_{j}=\binom{q}{j}-\binom{q-2}{j-2}$. Hence, we have
$\beta_{j}\left(\frac{R}{I}\right)=\# L_{j}+\# L_{j-1}
=\binom{q}{j}-\binom{q-2}{j-2}+\binom{q}{j-1}-\binom{q-2}{j-3}.$
Hence, we easily obtain that $\beta_{i}\left(\frac{R}{I}\right)=\binom{q+1}{q+1-i}-\binom{q-1}{q+1-i},$ for $i=2,3, \ldots, q$,
as required.
\end{proof}

In the following result, we will prove Proposition \ref{P1} in general.   
\begin{Theorem}\label{T1}
 Let $I=\left(u_{1}, \ldots, u_{q}, v\right)$ be an almost complete intersection monomial ideal and let $s$ be the smallest positive integer number such that $v \mid l c m\left(u_{j_{1}}, \ldots, u_{j_{s}}\right)$, for some subset $\left\{j_{1}, \ldots, j_{s}\right\} \subseteq\{1, \ldots, q\}$. Then, for $i=0,1, \ldots, s-1$, we have $\beta_{i}\left(\frac{R}{I}\right)=\binom{q+1}{i}$ and for $i=s, s+1, \ldots, q$, it holds that $\beta_{i}\left(\frac{R}{I}\right)=\binom{q+1}{q+1-i}-\binom{q+1-s}{q+1-i}$.	
\end{Theorem}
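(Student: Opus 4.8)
The plan is to run the same machine as in the proof of Proposition~\ref{P1}, but with the distinguished pair $\{1,2\}$ replaced by an as-yet-unidentified index set of size $s$. First I would invoke Example~\ref{E1}(iii): an almost complete intersection monomial ideal is semidominant, so its minimal free resolution is supported on the Scarf complex $\Delta_I$. Applying \cite[Corollary 6.7]{A} exactly as before reduces the whole statement to the count
\[
\beta_i\!\left(\tfrac{R}{I}\right)=\#L_i+\#L_{i-1},\qquad L_i=\bigl\{\sigma\subseteq\{1,\dots,q\}:\ \abs{\sigma}=i,\ v\nmid\lcm(u_j:j\in\sigma)\bigr\}.
\]
Thus everything comes down to computing $\#L_i$, i.e.\ to understanding precisely which subsets $\sigma$ satisfy $v\mid\lcm(u_j:j\in\sigma)$.

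The heart of the argument, and the step I expect to carry all the weight, is to exploit the complete intersection hypothesis on $(u_1,\dots,u_q)$. By Lemma~\ref{L1} the supports $\supp(u_1),\dots,\supp(u_q)$ are pairwise disjoint, so $\lcm(u_j:j\in\sigma)=\prod_{j\in\sigma}u_j$ and each variable lies in the support of at most one generator. I would then observe that for each $x_k\in\supp(v)$ there is a \emph{unique} index $j(k)$ with $x_k\in\supp(u_{j(k)})$, and that since $v$ does divide the lcm of \emph{some} subset (this is exactly the hypothesis that $s$ exists), the exponent of $x_k$ in $u_{j(k)}$ is at least its exponent in $v$. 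Setting $J^{*}=\{\,j(k):x_k\in\supp(v)\,\}$, disjointness forces the clean equivalence
\[
v\mid\lcm(u_j:j\in\sigma)\iff J^{*}\subseteq\sigma .
\]
Because $s$ was defined as the minimal size of a subset whose lcm is divisible by $v$, this equivalence immediately yields $\abs{J^{*}}=s$. Counting the $i$-subsets that contain the fixed $s$-set $J^{*}$ then gives $\#L_i=\binom{q}{i}-\binom{q-s}{i-s}$, with the convention $\binom{q-s}{i-s}=0$ for $i<s$.

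Finally I would substitute into $\beta_i=\#L_i+\#L_{i-1}$ and simplify by Pascal's rule: $\binom{q}{i}+\binom{q}{i-1}=\binom{q+1}{i}$ and $\binom{q-s}{i-s}+\binom{q-s}{i-s-1}=\binom{q+1-s}{i-s}$, which produces $\beta_i=\binom{q+1}{i}-\binom{q+1-s}{i-s}$. For $i<s$ the subtracted term vanishes and we get $\beta_i=\binom{q+1}{i}$; for $i\geq s$ I would rewrite both terms via the symmetry $\binom{n}{k}=\binom{n}{n-k}$ to match the stated form $\binom{q+1}{q+1-i}-\binom{q+1-s}{q+1-i}$. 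As a consistency check, specializing $s=2$ recovers Proposition~\ref{P1} verbatim.

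The only genuinely delicate point is the uniqueness/size claim $\abs{J^{*}}=s$: this is precisely where the complete intersection structure is indispensable, since for a general monomial ideal the family $\{\sigma:v\mid\lcm(u_j:j\in\sigma)\}$ need not be an up-set generated by a single subset, and $\#L_i$ would then fail to collapse to a single binomial coefficient. Everything after that identification is routine bookkeeping with binomial identities.
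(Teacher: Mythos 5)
Your proof is correct and follows essentially the same route as the paper: semidominance of an almost complete intersection plus \cite[Corollary 6.7]{A} to reduce to $\beta_i(R/I)=\#L_i+\#L_{i-1}$, then the count $\#L_i=\binom{q}{i}-\binom{q-s}{i-s}$ and Pascal's rule. If anything, your write-up is more careful than the paper's: where the paper simply asserts that the subsets whose lcm is divisible by $v$ are exactly those containing the single $s$-set $\{j_1,\ldots,j_s\}$, you derive this up-set structure (and the identity $|J^{*}|=s$) from the pairwise disjointness of the supports guaranteed by Lemma~\ref{L1}, which is precisely the justification the paper leaves implicit.
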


\begin{proof}
Since $I=\left(u_{1}, \ldots, u_{q}, v\right)$ is an almost complete intersection monomial ideal with $v \mid \operatorname{lcm}\left(u_{j_{1}}, \ldots, u_{j_{s}}\right)$, for $\left\{j_{1}, \ldots, j_{s}\right\} \subseteq\{1, \ldots, q\}$ and $I$ is a semidominant monomial ideal, the \cite[Corollary 6.7]{A} implies that $\beta_{i}\left(\frac{R}{I}\right)=\# L_{i}+\# L_{i-1}$, where $L_{i}$ is defined as before. For $i=0,1, \ldots, s-1$, it is evident that all basis elements of homological degrees $0,1, \ldots, s-1$ appear in the minimal free resolution of $\frac{R}{I}$. Specifically, we have 
$\# L_{i}=\binom{q}{i}$, which yields $\beta_{i}\left(\frac{R}{I}\right)=\# L_{i}+$ $\# L_{i-1}=\binom{q}{i}+\binom{q}{i-1}=\binom{q+1}{i}$, for $i=0,1, \ldots, s-1$. For $i=s$, since $v \mid l c m\left(u_{j_{1}}, \ldots, u_{j_{s}}\right)$ and $v \nmid l c m\left(u_{l_{1}}, \ldots, u_{l_{s}}\right)$ if there exist $l_{k}$ such that $l_{k} \neq j_{t}$ for $t=1, \ldots, s$, and so the only basis element removed from the minimal free resolution of $\frac{R}{I}$ in homological degree $s$ is $e_{j_{1}} \wedge \ldots \wedge e_{j_{s}}$. Hence we deduce that $\# L_{s}=\binom{q}{s}-1=\binom{q}{s}-\binom{q-s}{0}$. To find $\# L_{s+1}$, since $v \mid l c m\left(u_{j_{1}}, \ldots, u_{j_{s}}\right)$, it follows that $v$ divides all the basis containing $\left\{u_{j_{1}}, \ldots, u_{j_{s}}\right\}$. Therefore, we obtain $\# L_{s+1}=\binom{q}{s+1}-\binom{q-s}{1}$. To find $\# L_{j}$ for $s+1<j \leq$ $q$, since $v \mid \operatorname{lcm}\left(u_{j_{1}}, \ldots, u_{j_{s}}\right)$, it follows that $v$ divides all the basis containing $\left\{u_{j_{1}}, \ldots, u_{j_{s}}\right\}$. Then $\# L_{j}=$ $\binom{q}{j}-\binom{q-s}{j-s}$. Hence

$$
\begin{aligned}
	\beta_{i}\left(\frac{R}{I}\right)&=\# L_{i} +\# L_{i-1}\\
	&=\binom{q}{i}-\binom{q-s}{i-s}+\binom{q}{i-1}-\binom{q-s}{i-s-1} \\
	& =\binom{q+1}{i}-\binom{q+1-s}{i-s}.
\end{aligned}
$$

Consequently, we have $\beta_{i}\left(\frac{R}{I}\right)=\binom{q+1}{q+1-i}-\binom{q+1-s}{q+1-i}$ for $i=s, s+1, \ldots, q$, as desired.
\end{proof}

The following example has been appeared in \cite[Example 1.6]{HHMY}.

 \begin{Example}\label{E2}
Let $2\leq j<q-1$ and consider the ideal \\ $I=(x_1y_1,x_2y_2,\ldots,x_{q}y_{q},y_1\ldots y_j)$ of $K[x_1,\ldots,x_q,y_1,\ldots,y_q]$. 
 Applying Theorem \ref{T1}, we deduce that $\beta_{i}(I)=\binom{q+1}{i+1}$ for $i=0,1, \ldots, j-2$ and $\beta_{i}(I)=\binom{q+1}{q-i}-\binom{q+1-j}{q-i}$ for $i=j-1,\ldots,q-1$.	
 \end{Example}

\begin{Example}\label{E3}
	
Consider an almost complete intersection ideal\\ $I=(x_1^{a_1},x_2^{a_2},\ldots,x_n^{a_n},x_1^{b_1}\ldots x_n^{b_n})$, where $a_i>b_i>0$ for all $ 0\leq i\leq n$.
Then the Betti numbers are  $\beta_i(I)=\binom{n+1}{i+1}$ for $i=0,1,\ldots, n-2$ and  $\beta_{n-1}(I)=n$.

\end{Example}

\begin{Theorem}\label{T2}
Let $I$ be an almost complete intersection squarefree monomial ideal. Then the following statements hold:
\begin{enumerate}
	
\item[(a)] if $I$ satisfies in the conditions in case $(i)$ of Theorem \ref{T0}, then for $i=0,1, \ldots, p-1$, we have $\beta_{i}\left(\frac{R}{I}\right)=\binom{q+1}{i}$ and
it holds that
$\beta_{i}\left(\frac{R}{I}\right)=\binom{q+1}{q+1-i}-\binom{q+1-p}{q+1-i}$, for $i=p, p+1, \ldots, q$.	
\item[(b)] if $I$ satisfies the conditions in cases $(ii)$ and $(vi)$ of Theorem \ref{T0}, then\\ $\beta_{i}\left(\frac{R}{I}\right)=\binom{q+1}{i}$ for $i=0,1,\ldots,q$.	
\item[(c)] if $I$ satisfies the conditions in cases $(iii)$,$(iv)$ and $(v)$ of Theorem \ref{T0}, then\\  $\beta_{i}\left(\frac{R}{I}\right)=\binom{q+1}{i}-\binom{q-1}{i-2}$
for $i=0,1,\ldots,q$. 
\end{enumerate}	
\end{Theorem}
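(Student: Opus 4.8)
The plan is to invoke Theorem~\ref{T0} and treat the six normal forms by three different mechanisms, after noting the uniform features $\abs{G(I)}=q+1$ and $\height(I)=q$ that hold in every case.

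For part (a) I would first check that case (i) is an almost complete intersection in the sense required by Theorem~\ref{T1}. The generators $u_1v_1,\dots,u_rv_r,u_{r+1},\dots,u_q$ have pairwise disjoint supports (all the $u_i$ and $v_i$ do), hence form a complete intersection, and the extra generator is $v=v_1\cdots v_r$; by Lemma~\ref{L1} this is exactly the required shape. Since each factor $v_k$ occurs only in the generator $u_kv_k$, the monomial $v$ divides $\lcm$ of a subfamily of the complete-intersection generators precisely when that subfamily contains all of $u_1v_1,\dots,u_rv_r$, so the smallest such subfamily has size $r$. Thus the integer called $s$ in Theorem~\ref{T1} equals $r$, which is the number $p$ in the statement, and quoting Theorem~\ref{T1} verbatim gives the two formulas in (a).

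For part (b) I would verify directly from the Definition that cases (ii) and (vi) are dominant: in case (vi) each generator $u_jv_\bullet v_\bullet$ is dominant through a variable of $u_j$ (the $u_j$ being disjoint from everything else), and each $u_k$ is dominant through any of its own variables; in case (ii) the only potentially problematic generator $u_{q+1}v_1\cdots v_r$ is dominant through a variable of $u_{q+1}$. Once dominance is established, the characterization recalled after the Definition (Taylor's resolution is minimal iff $I$ is dominant) shows that $\beta_i(R/I)$ is the number $\binom{q+1}{i}$ of $i$-element subsets of $G(I)$, which is the claim.

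For part (c) I would exploit a tensor decomposition. In each of the cases (iii), (iv), (v) the generators $u_4,\dots,u_{q+1}$ have supports disjoint from one another and from everything appearing in the first three generators, so $R/I\cong (R_1/J)\otimes_K (R_2/C)$, where $C=(u_4,\dots,u_{q+1})$ is a complete intersection of $q-2$ elements and $J$ is the ``triangle'' core on the remaining variables, namely $(v_1v_2,v_1v_3,v_2v_3)$, $(u_1v_1v_2,v_1v_3,v_2v_3)$, or $(u_1v_1v_2,u_2v_1v_3,v_2v_3)$. As the two factors involve disjoint variables, the minimal free resolution of $R/I$ is the tensor product of the two minimal resolutions, whence $\beta_i(R/I)=\sum_b \beta_{i-b}(R_1/J)\binom{q-2}{b}$, the numbers $\binom{q-2}{b}$ coming from the Koszul resolution of $C$. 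The crux is to show $\beta_\bullet(R_1/J)=(1,3,2)$ in all three cases: each core $J$ has three minimal generators whose pairwise intersection pattern is a triangle, giving $\height J=\grade J=2$, so a direct Hilbert--Burch computation (exhibiting the $3\times 2$ syzygy matrix whose signed maximal minors recover the three generators) yields the minimal resolution $0\to R_1^2\to R_1^3\to R_1\to R_1/J\to 0$. Substituting $(1,3,2)$ and using $1+3t+2t^2=(1+t)(1+2t)$, the convolution becomes the $t^i$-coefficient of $(1+t)^{q-1}(1+2t)$, i.e. $\binom{q-1}{i}+2\binom{q-1}{i-1}$, and the Pascal expansion $\binom{q+1}{i}=\binom{q-1}{i}+2\binom{q-1}{i-1}+\binom{q-1}{i-2}$ rewrites this as $\binom{q+1}{i}-\binom{q-1}{i-2}$, as required.

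The step I expect to be the main obstacle is the uniform determination $\beta_\bullet(R_1/J)=(1,3,2)$ for the three distinct cores in (c): one must establish codimension two and perfectness simultaneously in spite of the extra disjoint factors $u_1,u_2$ glued onto some generators, and confirm that these factors do not raise the projective dimension, so that the clean $(1,3,2)$ shape is attained rather than the Taylor upper bound $(1,3,3,1)$. The remaining ingredients---the disjoint-support checks, the dominance verification, the tensor-product identification, and the final binomial identity---are routine.
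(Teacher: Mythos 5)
Your proposal is correct, and it splits naturally against the paper's proof: parts (a) and (b) coincide with what the paper does, while part (c) takes a genuinely different route. For (a) the paper simply cites Theorem~\ref{T1}; your identification of the parameter, $s=r=p$ (each $v_k$ occurs among the complete-intersection generators only in $u_kv_k$), is exactly the detail the paper leaves implicit. For (b) the paper likewise argues that cases (ii) and (vi) are dominant, so the Taylor resolution is minimal. In (c), however, the paper stays inside the counting scheme of Theorem~\ref{T1}: it designates $v_2v_3$ as the distinguished generator, notes that it divides the $\lcm$ of a subset of the remaining $q$ generators exactly when the subset contains both $v_1v_2$ and $v_1v_3$, so $\# L_i=\binom{q}{i}-\binom{q-2}{i-2}$, and then sums $\beta_i=\# L_i+\# L_{i-1}$ via the Scarf-complex formula of \cite[Corollary 6.7]{A}. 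You instead decompose $R/I\cong (R_1/J)\otimes_K(R_2/C)$ with $C=(u_4,\dots,u_{q+1})$ a complete intersection on disjoint variables, prove $\beta_\bullet(R_1/J)=(1,3,2)$ for each triangle core by Hilbert--Burch, and convolve with the Koszul binomials. Your route costs extra work (the codimension-two and perfectness check for the cores), but it buys rigor precisely where the paper is thinnest: in cases (iii)--(v) the remaining $q$ generators do \emph{not} form a complete intersection ($v_1v_2$ and $v_1v_3$ share $v_1$), and these ideals are not semidominant in the paper's own sense --- in case (iii) all three triangle generators are nondominant, and the Scarf complex of $(v_1v_2,v_1v_3,v_2v_3)$ consists of three isolated vertices, so it cannot carry a minimal resolution. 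Thus the paper's appeal to ``the argument from Theorem~\ref{T1}'' is outside the hypotheses of the machinery it invokes, although the count it produces is the correct one, as your computation independently confirms. Finally, the step you flagged as the main obstacle does go through: for the case (v) core $(u_1v_1v_2,\,u_2v_1v_3,\,v_2v_3)$ the Taylor syzygies satisfy $\sigma_{12}=u_2\sigma_{13}-u_1\sigma_{23}$, and $\sigma_{13},\sigma_{23}$ admit no relation (their $e_1$- and $e_2$-components force any coefficients to vanish), so the glued-on factors $u_1,u_2$ alter the Hilbert--Burch matrix but not the $(1,3,2)$ shape, whose signed maximal minors are again the three generators; cases (iii) and (iv) are identical.
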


\begin{proof}
Case $(a)$ is satisfied by Theorem \ref{T1}. In case $(b)$, since 
$I$ is a dominant ideal, it admits a minimal free resolution given by the Taylor resolution. Consequently, the Betti numbers are $\beta_{i}\left(\frac{R}{I}\right)=\binom{q+1}{i}$ for $i=0,1,\ldots,q$.\\
For case $(c)$, we assume that $I$ satisfies the condition $(iii)$ of Theorem \ref{T0}. The proofs for the other cases $(iv)$ and $(v)$ of Theorem \ref{T0} are similar. 
Since  $v_2v_3 \mid l c m\left(v_1v_2,v_1v_3\right)$, it follows that $v_2v_3$ divides only those subsets of generators that contain $v_1v_2,v_1v_3$. Hence, the division process does not occur in any other cases. By applying the argument from Theorem \ref{T1}, we find $\# L_{i}=\binom{q}{i}-\binom{q-2}{i-2}$. Therefore, the Betti numbers satisfy $\beta_{i}\left(\frac{R}{I}\right)=\# L_{i}+\# L_{i-1}=\binom{q}{i}-\binom{q-2}{i-2}+\binom{q}{i-1}-\binom{q-2}{i-3}$. Simplifying yields $\beta_{i}\left(\frac{R}{I}\right)=\binom{q+1}{i}-\binom{q-1}{i-2}$ for $i=0,1,\ldots,q$, as required.

\end{proof}

Since the Betti numbers and the property of being an almost complete intersection are preserved under polarization  (see for instance \cite[Corollary 1.6.3]{HH} or \cite{F}), we can now state the following result:

\begin{Theorem}\label{T3}
Let 
$I$ be an almost complete intersection monomial ideal. Then 
$I$ satisfies the conditions outlined in Theorem \ref{T2}.
\end{Theorem}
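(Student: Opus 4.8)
The plan is to deduce this general statement from the squarefree case already settled in Theorem~\ref{T2} by passing to the polarization. Write $I^{p}$ for the polarization of $I$, living in a suitable larger polynomial ring $R^{p}$. First I would verify that $I^{p}$ is again a squarefree almost complete intersection monomial ideal. Squarefreeness is immediate from the construction of the polarization, and it is standard that polarization preserves both the number of minimal generators, $|G(I^{p})|=|G(I)|$, and the height, $\height(I^{p})=\height(I)$ (the latter because polarization raises the Krull dimension of the quotient by exactly the number of newly introduced variables). Hence $\height(I^{p})=\height(I)=|G(I)|-1=|G(I^{p})|-1$, so $I^{p}$ is an almost complete intersection, exactly as already noted in the discussion preceding Proposition~\ref{P2}.

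Next I would apply the Kimura--Terai--Yoshida classification (Theorem~\ref{T0}) to $I^{p}$: being squarefree and almost complete intersection, $I^{p}$ must have one of the six forms $(i)$--$(vi)$. Theorem~\ref{T2} then delivers the Betti numbers $\beta_{i}(R^{p}/I^{p})$ explicitly, according to which of the three cases $(a)$, $(b)$, $(c)$ the form of $I^{p}$ falls into. Finally, since polarization preserves the (graded) Betti numbers, that is $\beta_{i}(R/I)=\beta_{i}(R^{p}/I^{p})$ for all $i$ by \cite[Corollary~1.6.3]{HH} (see also \cite{F}), the same numerical formulas compute $\beta_{i}(R/I)$. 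This is precisely the assertion that $I$ satisfies the conclusions of Theorem~\ref{T2}.

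The one point that needs genuine checking, and which I expect to be the main (if modest) obstacle, is that the transfer lands not merely on a formula \emph{of the same shape} but on the identical formula with the identical parameters. The index ranges in Theorem~\ref{T2} are governed by $q=|G(I)|-1$, preserved as noted above, and by the integer $s$ of Theorem~\ref{T1}, namely the least size of a subset $\{j_1,\dots,j_s\}$ with $v\mid\lcm(u_{j_1},\dots,u_{j_s})$. I would confirm that $s$ is itself a polarization invariant by comparing exponents one variable at a time: one has $v\mid\lcm(u_{j_1},\dots,u_{j_s})$ if and only if, for every variable $x_k$, some $u_{j_t}$ satisfies $\deg_{x_k}(u_{j_t})\ge\deg_{x_k}(v)$, and reading off the split variables $x_{k,1},x_{k,2},\dots$ shows that the corresponding divisibility for the polarized generators is equivalent to this same condition. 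Consequently the sets $L_i$ used in the proof of Theorem~\ref{T1} have the same cardinalities before and after polarization, so the case distinction and the exact Betti formulas carry over verbatim, which completes the argument. (If $\height(I)=1$, so that $q=1$, the ideal is $2$-generated and its resolution is immediate, so one may freely assume $q\ge 2$ as in Theorem~\ref{T0}.)
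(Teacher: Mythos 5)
Your proof is correct and takes essentially the same route as the paper: the paper's entire argument is the one-sentence observation preceding Theorem~\ref{T3} that polarization preserves Betti numbers and the almost complete intersection property (citing \cite[Corollary 1.6.3]{HH} and \cite{F}), so that Theorem~\ref{T2} applied to $I^{p}$ yields the result. Your extra verification that the parameter $s$ (and hence the case distinction and index ranges) is invariant under polarization is a detail the paper leaves implicit, and it is checked correctly.
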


\begin{Example}
Let $I=(x_1^2x_2x_3^3,x_5x_2x_4^5,x_3^3x_4^5,x_6x_7^3,x_8x_9^2)$ be an almost complete intersection satisfying in case $(v)$ of Theorem \ref{T0}. Applying Theorem \ref{T2}$(c)$, we deduce the Betti numbers of $R/I$ as follows: $\beta_{0}\left(\frac{R}{I}\right)=1$, $\beta_{1}\left(\frac{R}{I}\right)=5$, $\beta_{2}\left(\frac{R}{I}\right)=9$, $\beta_{3}\left(\frac{R}{I}\right)=7$ and $\beta_{4}\left(\frac{R}{I}\right)=2$. 
\end{Example}

Let $I=(u_1,\ldots,u_q)$ be a complete intersection monomial ideal, and let $s$ be a positive integer. The Eagon-Northcott resolution \cite{EN} provides the Betti numbers of $I^s$ as follows:

$\beta_{i}(I^s)=\binom{q+s-1}{s+i}\binom{s+i-1}{i}$ for all $i=0,1,\ldots,q-1$, see also \cite{GV}.  Given this, a natural question arises:

\begin{Question}
Let $I=(u_1,\ldots,u_q,v)$ be an almost complete intersection monomial ideal and let $s$ be a positive integer. What is the formula for the Betti numbers of $I^s$?
\end{Question}




\end{document}